\documentclass[11pt,a4paper]{article}
\usepackage{amsmath,amsthm,nicefrac}
\usepackage{amsfonts, amstext}
\usepackage{graphicx} 
\usepackage{textcomp} 
\usepackage[font={small,it}]{caption}
\usepackage{enumitem}
\usepackage{mathrsfs}
\usepackage{amsmath,amssymb, amsthm, amstext}
\usepackage{amsfonts}
\usepackage{xcolor}
\usepackage{esvect}
\usepackage[linesnumbered, ruled, vlined]{algorithm2e} 

\SetKwComment{tcp}{\textit{\% }}{} 
\SetKwInput{KwIn}{Input} 
\SetKwInput{KwOut}{Output} 
\SetKw{KwTo}{to} 
\SetKw{KwRet}{return} 
\DontPrintSemicolon 
\SetAlgoLined 
\SetNlSty{}{}{:} 
\SetAlgoNlRelativeSize{-1}
\definecolor{NearBlack}{rgb}{0.1, 0.1, 0.2}
\usepackage{hyperref}

\hypersetup{colorlinks=true,%
            citebordercolor={.1 .1 .2},linkbordercolor={.1 .1 .2},%
citecolor=NearBlack,urlcolor=black,linkcolor=NearBlack}

\newtheorem{theorem}{Theorem}[section]
\newtheorem{lemma}[theorem]{Lemma}
\newtheorem{claim}[theorem]{Claim}

\newtheorem*{lemma*}{Lemma}

\usepackage{geometry}

\title{How many colours to connect cliques?}
\author{Archit Manas}
\date{\vspace{-2ex}}

\begin{document}

\maketitle
\begin{abstract}
We study the following problem: given an edge-colouring of a $K_n$ with $r$ colours, how many colours does one need to keep (in the worst-case scenario) so that the subgraph formed by those colours is connected? We determine this extremal function up to constant factors in every parameter regime, and show that it has the same asymptotic order as the corresponding number (of colours) needed for ensuring that no vertex is isolated. This gives a complete description of the ``spanning'' threshold for connectivity in edge-coloured complete graphs.
\end{abstract}
\section{Introduction}
A folklore observation, usually attributed to Erd\H{o}s and Rado, states that in every $2$-edge-colouring of a complete graph, one of the two colour classes is connected; see, for example, \cite{GyarfasSurvey2011}. For colourings with more colours, a classical problem of Gerencs\'er and Gy\'arf\'as \cite{GerencserGyarfas1967} asks for the largest monochromatic connected component that must occur in every $r$-edge-colouring of $K_n$. \\

\noindent
Bollob\'as and Gy\'arf\'as \cite{BollobasGyarfas2008} initiated the study of large highly connected monochromatic subgraphs in edge-coloured complete graphs. Bollob\'as subsequently posed a broader ``power of many colours'' problem: given an $r$-edge-colouring of $K_n$, how large a $\kappa$-connected subgraph can one guarantee if edges of up to $s$ colours may be used? Liu, Morris and Prince obtained sharp or nearly sharp estimates in several parameter ranges, treating the multicoloured case $s\geq 2$ in \cite{LMP2008} and the monochromatic case $s=1$ in \cite{LMP2009}. \\

\noindent
The case of ordinary connectivity in the above setting ($\kappa=1$) was recently investigated systematically by Alon, Buci\'c, Christoph and Krivelevich \cite{ABCK2024}. They define $f(n,r,s)$ to be the largest integer $m$ such that every $r$-edge-colouring of $K_n$ contains a connected subgraph on at least $m$ vertices whose edges use at most $s$ colours. For $n$ sufficiently large relative to $r$, they determine the approximate behaviour of $f(n,r,s)$ throughout the parameter range, up to a logarithmic factor in the remaining regimes and more precisely in several special cases. \\

\noindent
In the same work, a weaker covering parameter $g(n,r,s)$ is also considered: the largest integer $m$ such that, in every $r$-edge-colouring of $K_n$, there is a set of at most $s$ colours incident with at least $m$ vertices. Thus, connectivity is replaced by the requirement that the graph formed by the selected colours have few isolated vertices. This problem has an equivalent formulation in terms of unions of cliques in a clique covering of $K_n$, and is related to the classical covering problems of Erd\H{o}s and R\'enyi \cite{ErdosRenyi1956}, Mills \cite{Mills1979} and F\"uredi \cite{Furedi1990}. In \cite{ABCK2024}, the parameter $g(n,r,s)$ is determined up to a constant factor for every $s$, again under the assumption that $n$ is sufficiently large relative to $r$. \\

\noindent
In this note, we study the ``spanning'' thresholds of these two parameters. Rather than fixing the number of available colours and maximising the number of vertices reached, we insist on reaching all $n$ vertices and ask for the minimum number of colours that must be allowed. \\

\noindent
More formally, let $\chi\colon E(K_n)\to[r]$ be an $r$-edge-colouring, and for $S\subseteq[r]$, let $G_{\chi}[S]$ denote the subgraph of the $K_n$ formed by the edges with colours in $S$. Define
\[
A(n,r)
:=
\max_{\chi}
\min\bigl\{
|S|:G_\chi[S]\text{ is connected}
\bigr\}
\]
and
\[
B(n,r)
:=
\max_{\chi}
\min\bigl\{
|S|:G_\chi[S]\text{ has no isolated vertices}
\bigr\}.
\]
Equivalently, in the notation of \cite{ABCK2024},
\[
A(n,r)=\min\{s\geq1:f(n,r,s)=n\},
\qquad
B(n,r)=\min\{s\geq1:g(n,r,s)=n\}.
\]
\noindent
The parameters $f$ and $g$ can have very different behaviour away from the spanning endpoint. Our main result determines $A, B$ up to constant factors and in fact shows that the two nevertheless have the same order of magnitude in every parameter regime.
\begin{theorem}
\label{thm:1:1}
Let $n,r$ be positive integers with $n\geq2$. Then
\[
A(n,r),B(n,r)
=
\Theta\left(
\min\left\{
n,\,
r,\,
\sqrt{r\log\left(2+\frac{n^2}{r}\right)}
\right\}
\right),
\]
where the implicit constants are absolute. Equivalently,
\[
A(n,r),B(n,r)
=
\begin{cases}
\Theta(n),
    & n\leq\sqrt{2r},\\[2mm]
\Theta\left(\sqrt{r\log(n^2/r)}\right),
    & \sqrt{2r}<n<
      (1.01)^r,\\[2mm]
\Theta(r),
    & n\geq
      (1.01)^r.
\end{cases}
\]
\end{theorem}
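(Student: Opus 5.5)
Since a connected spanning graph has no isolated vertices, $B(n,r)\le A(n,r)$ pointwise. So it suffices to prove an upper bound on $A$ and a lower bound on $B$, each of order $M:=\min\{n,r,\sqrt{r\log(2+n^2/r)}\}$.

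\textbf{Upper bound on $A$.} Fix a colouring $\chi$ and let $H$ be the set of colours that actually appear.
\begin{itemize}
\item \emph{$A\le r$.} Keeping all colours gives $K_n$.
\item \emph{$A\le n-1$.} Fix a spanning tree of $K_n$ and keep the colours of its $n-1$ edges.
\item \emph{$A=O(\sqrt{r\log(2+n^2/r)})$.} We may assume $n^2\ge r$, since otherwise $n$ is already the minimum. Split the colours into \emph{large} ones, with at least $n^2/(2k)$ edges, and the rest, where $k\approx\sqrt{r/L}$ and $L=\log(2+n^2/r)$. There are at most $O(k)$ large colours, because there are $\binom n2$ edges in total. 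We keep all of them. Suppose the kept graph is still disconnected, with some component $C$. Every edge across the cut $(C,V\setminus C)$ has a small colour, and the cut contains at least $n-1$ edges, since it is a cut in $K_n$.

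To finish, repeatedly pick a small colour greedily. A cleaner route is a random selection: keep each colour independently with probability $p\approx\sqrt{L/r}$, so about $\sqrt{rL}$ colours in expectation. A fixed cut $(C,V\setminus C)$ with $|C|=c\le n/2$ has $c(n-c)$ edges spread over at most $r$ colours. Each colour contributes at most $\min(c(n-c),e_i)$ edges, so the cut meets at least $c(n-c)/\max_i e_i$ colours. Bound the probability that no colour of the cut is kept, then union-bound over cuts, grouped as about $\binom nc$ cuts of each size $c$.

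The cut count is too large for the naive union bound. Instead, I would argue through components: for each $c$, it is enough to control sets $C$ that are unions of components of the graph spanned by the large colours. Alternatively, first pick colours greedily so that every vertex gets degree $\ge t$. This makes all components of size $>t$, so there are few cuts per size. Then add random colours.
\end{itemize}
This step is the main obstacle. Matching the $\log(n^2/r)$ factor needs a careful balance: the $\log\binom nc$ cuts of size $c$ have to be beaten by $p$ times the number of distinct colours in the cut, which is at least $\min\{r,\,c(n-c)\}$-ish.

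\textbf{Lower bound on $B$.} I need colourings in which covering all vertices requires many colours.
\begin{itemize}
\item \emph{Regime $n\le\sqrt{2r}$.} Give every edge a distinct colour, which is possible because $\binom n2\le r$. Covering all vertices then needs a set of edges covering every vertex, so at least $n/2$ colours.
\item \emph{Regime $n\ge 1.01^r$.} Use the classical construction of colour classes that are complete bipartite graphs indexed by coordinates. Assign to each vertex a distinct binary vector of length $r'\approx r$. Colour the edge $uv$ by the first coordinate $i$ where $u$ and $v$ differ, together with the bit pattern, so that each colour class is a union of bicliques. A covering family of colours must then cover every vector. Choosing vectors from a code with large distance forces $\Omega(r)$ colours. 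I would adapt the construction of \cite{ABCK2024} for $g$.
\item \emph{Middle regime.} Take $k\approx\sqrt{r/L}$ and blow up: partition the vertices into $m=\sqrt{rL}$ groups. Colour the edges inside the groups by a construction of the previous type on $m$ levels, and the edges between groups with distinct colours per pair, using about $m^2/L\le r$ colours. Then a set $S$ with $|S|\le\varepsilon\sqrt{rL}$ colours misses some vertex, by a counting or random-vertex argument: a random vertex is covered by a given colour with probability at most about $1/\sqrt{r}$-ish, and the coverage events must be arranged to be nearly independent.
\end{itemize}
I expect this to parallel the determination of $g$ in \cite{ABCK2024}: setting $s=c\sqrt{rL}$ there should give $g(n,r,s)<n$, and I would invoke that result in the middle regime where $n$ is large relative to $r$. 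The small-$n$ end of this regime would be handled by the distinct-colours construction on a subset of the vertices.
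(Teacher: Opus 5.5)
Your proposal has genuine gaps in both directions.

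\textbf{Upper bound.} For $A=O(\sqrt{r\log(2+n^2/r)})$ you stop at the step you yourself call the main obstacle, and the random route cannot work as set up. A cut may meet only one colour, since a colour class can be exactly the bipartite graph of a cut; so ``at least $\min\{r,c(n-c)\}$-ish'' distinct colours is false. Even after keeping the $O(k)$ large colours, your own count only guarantees about $ck/n$ small colours across a cut with $|C|=c\le n/2$. With $pk\approx1$ this bounds the failure probability by $\exp(-O(c/n))$, which is useless against $\binom nc$ cuts.

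The missing idea is a deterministic dichotomy followed by contraction.
\begin{itemize}
\item If some vertex sees fewer than $t$ colours, those colours contain a spanning star, so $A\le t-1$.
\item Otherwise let $\mathcal F_c$ be a maximal forest in colour class $c$. Each vertex is non-isolated in at least $t$ of them, so $\sum_c|E(\mathcal F_c)|\ge nt/2$. Hence some $\mathcal F_c$ has at least $\lceil nt/2r\rceil$ edges, i.e.\ at most $n-\lceil nt/2r\rceil$ components.
\item Keep colour $c$, choose one representative per component, and recurse on the colouring induced on the representatives. This gives $A(n,r)\le\max\bigl(t-1,\,1+A(n-\lceil nt/2r\rceil,r)\bigr)$.
\end{itemize}
With $t=\lceil\sqrt{r\log(n^2/r)}\rceil$, after at most $t$ rounds at most $ne^{-t^2/2r}\le\sqrt r$ representatives remain. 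A spanning tree on them costs at most $\sqrt r$ further colours.

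\textbf{Lower bound.} Only your case $n\le\sqrt{2r}$ is sound.
\begin{itemize}
\item \emph{Regime $n\ge 1.01^r$.} In the first-differing-coordinate colouring, let $i_0$ be the first coordinate on which your vectors are not all equal. Then colour $i_0$ is a spanning complete bipartite graph, so $A=B=1$ whatever the distance of the code. If the colour also records the common prefix, the colours are the branching nodes of the trie, $n-1$ of them, which exceeds $r$ for large $n$.
\item \emph{Middle regime.} Distinct colours for the pairs among $m=\sqrt{rL}$ groups already cost $\binom m2\approx rL/2$ colours, more than $r$ once $L>2$. The near-independence that should force $\Omega(\sqrt{rL})$ colours is never constructed.
\item \emph{Appeal to \cite{ABCK2024}.} You cannot fall back on it. Their results require $n$ large relative to $r$, and determining $g$ up to a constant factor does not locate the $s$ at which $g(n,r,s)=n$.
\end{itemize}

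What is missing is the intersecting-family construction. Index the vertices by an intersecting family and colour $uv$ by an element of $F_u\cap F_v$; then any colour set covering all vertices must be a transversal. The paper uses $\binom{[2b+1]}{b+1}$, which has about $4^b$ sets and transversal number $b+1$. It blows this up into $a$ copies with colour set $\binom{\{0\}\cup[a]}{2}\times[2b+1]$, so a colour $(\{i,j\},c)$ can only help copies $i$ and $j$ at coordinate $c$. This forces at least $a(b+1)/2$ colours. Take $b\approx\log_{16}(n^2/r)$ and $a\approx\sqrt{r/(8b)}$, and use that $B$ is monotone in both arguments (clone a vertex to increase $n$). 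This gives $\Omega(\sqrt{r\log(n^2/r)})$. The regime $n\ge 1.01^r$ then follows from $n=\lfloor1.01^r\rfloor$ by monotonicity.
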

Let us remark here that throughout, we write $\log$ for the natural logarithm (to the base $e$). \\

\noindent
The parameter $A(n,r)$ is also closely related to the minimum labelling spanning tree problem, introduced by Chang and Leu \cite{ChangLeu1997}. Given an edge-labelled connected graph, the minimum labelling spanning tree problem asks for a spanning tree using the smallest possible number of distinct labels. The problem has been studied from algorithmic, approximation and optimisation perspectives; see, for example, \cite{KrumkeWirth1998}. For a fixed edge-colouring $\chi$ of $K_n$, the minimum number of colours required to form a connected spanning subgraph is exactly the minimum number of labels required by a spanning tree. Thus, the parameter $A(n,r)$ may be viewed as the largest possible optimum of the minimum labelling spanning tree problem over all $r$-edge-colourings of $K_n$. Our focus, however, is extremal rather than algorithmic: we seek bounds depending only on the number of vertices and the number of available colours.  \\

\noindent
An important distinction from the previous work on $f(n,r,s)$ and $g(n,r,s)$ is that our estimates hold uniformly for all $n$ and $r$. In particular, we determine the ``spanning'' thresholds also when the order of the complete graph is small or moderate relative to the number of colours, where the large-$n$ results of \cite{ABCK2024} do not directly apply. \\

\noindent
The proof of \hyperref[thm:1:1]{Theorem 1.1} combines two different arguments. The upper bound is obtained through a recursive reduction based on maximal monochromatic forests, while the lower bound follows from an explicit construction involving intersecting set systems. \\

\noindent
An important observation is that trees have no isolated vertices, and thus $B(n,r) \leq A(n,r)$. So it will be enough to find the appropriate upper bounds for $A$ and lower bounds for $B$.
\section{Upper Bounds for \texorpdfstring{$A(n, r)$}{A(n, r)}}
Note that picking all $r$ colours certainly connects the graph, while any spanning tree uses $n-1$ edges, and thus at most $n$ colours. And thus we have shown:
\begin{lemma}
\label{lem:2:1}
For all $n, r \geq 1$ we have $A(n, r) \leq \min(n, r)$.
\end{lemma}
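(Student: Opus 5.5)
We bound $A(n,r)$ by exhibiting, for an arbitrary $r$-edge-colouring $\chi$ of $K_n$, two colour sets $S$ for which $G_\chi[S]$ is connected: one of size at most $r$ and one of size at most $n-1$. Since $A(n,r)$ is the maximum over $\chi$ of the minimum such $|S|$, this gives $A(n,r)\le\min(r,n-1)\le\min(n,r)$.

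\emph{Bound by $r$.} Take $S=[r]$. Every edge of $K_n$ receives some colour in $[r]$, so $G_\chi[[r]]=K_n$, which is connected. Hence the minimum over $S$ is at most $r$ for every $\chi$.

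\emph{Bound by $n$.} Since $n\ge 1$, the complete graph $K_n$ is connected and has a spanning tree $T$; fix one, say the star centred at some vertex. It has exactly $n-1$ edges. Let $S=\{\chi(e):e\in E(T)\}$, so $|S|\le n-1\le n$. Every edge of $T$ has its colour in $S$, so $T\subseteq G_\chi[S]$. Since $T$ is spanning and connected, $G_\chi[S]$ is connected as well. The degenerate case $n=1$ is consistent with this: $T$ has no edges, and the one-vertex graph is trivially connected.

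The argument is short; the only point requiring any care is the definition of $A(n,r)$, which is stated with the implicit convention $S\neq\emptyset$ in the form $\min\{s\ge1 : f(n,r,s)=n\}$. For $n=1$ the right-hand side $\min(n,r)=1$ still allows any single colour, so the bound holds in that case too.
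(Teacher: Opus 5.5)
Your proof is correct and is the same argument as the paper's: taking all $r$ colours gives $K_n$ itself, and the colours appearing on any spanning tree (at most $n-1$ of them) already connect the graph. Your remark about $n=1$ is also right; note that under the $s\geq 1$ convention the intermediate bound $\min(r,n-1)$ fails at $n=1$, which is why the lemma is stated with $\min(n,r)$.
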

\noindent The ``main'' regime of interest is $\sqrt{2r} < n < (1.01)^r$. To upper bound $A$ here, we prove the following recursive estimate.
\begin{lemma}
\label{lem:2:2}
Let $n, r, t$ be positive integers with $r \geq t$. Then we have, \[A(n, r) \leq \max \left(t-1, 1 + A \left(n - \left\lceil\frac{nt}{2r}\right\rceil, r\right) \right). \]
\end{lemma}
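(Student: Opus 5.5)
The plan is to fix an arbitrary $r$-edge-colouring $\chi$ of $K_n$ and split into two cases. Either some $t-1$ colours already connect $G_\chi$, which gives the first term of the maximum. Or we find a colour $c$ whose colour class contains a forest $F$ with at least $\lceil nt/(2r)\rceil$ edges, and reduce to a smaller instance, which gives the second term. The hypothesis $r\ge t$ is only used to make the case split meaningful.

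\emph{Reduction step (second term).} Suppose colour $c$ has a forest $F$ with exactly $m:=\lceil nt/(2r)\rceil$ edges; if the maximal spanning forest of colour $c$ is larger, pass to a subforest. Then $F$ partitions $V(K_n)$ into exactly $n':=n-m$ vertex-disjoint trees $T_1,\dots,T_{n'}$. Contract each $T_i$ to a single vertex. For each pair $i\ne j$, pick one edge of $K_n$ between $T_i$ and $T_j$ and give the pair its colour. This yields an $r$-edge-colouring $\chi'$ of $K_{n'}$; some of these edges may have colour $c$, which is harmless. By the definition of $A$, there is a set $S'$ of at most $A(n',r)$ colours such that $G_{\chi'}[S']$ is connected. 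Lifting back, the chosen representative edges with colours in $S'$, together with the colour-$c$ trees $T_i$, form a connected spanning subgraph of $G_\chi[S'\cup\{c\}]$. Hence $\chi$ is connected by at most $1+A(n-\lceil nt/(2r)\rceil,r)$ colours. This route deliberately avoids any monotonicity of $A$ in $n$, which is why we trim $F$ to exactly $m$ edges.

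\emph{Dichotomy (the main obstacle).} It remains to show that if no $t-1$ colours connect $G_\chi$, then some colour class spans a forest with at least $nt/(2r)$ edges. The basic tool is a cut-counting estimate. If $U$ is a union of components of $G_\chi[S]$ with $|U|\le n/2$, then all $|U|(n-|U|)\ge |U|n/2$ edges across the cut have colours outside $S$. Moreover, a colour class restricted to a bipartite set of crossing edges with $e$ edges has a spanning forest of size at least $e/|U|$: a component meeting $U$ in $a$ vertices and the other side in $b$ vertices has at most $ab\le |U|(a+b-1)$ edges.

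Pigeonholing over at most $r$ colours, this alone gives a colour whose forest has about $n/(2r)$ edges, so the factor $t$ must come from using all $t-1$ colours. The first attempt will be to take $S$ to consist of the $t-1$ colours with the largest maximal forests $e_1\ge\dots\ge e_{t-1}$, and argue by contradiction assuming $e_1<nt/(2r)$. Then every monochromatic component has fewer than $nt/(2r)+1$ vertices. So each vertex meets at least about $2r/t$ distinct colours, and each colour class is a union of small cliques-worth of edges. Combining the $t-1$ disconnected colour classes in $S$ with the crossing-edge count should then force the total forest mass of the colours outside $S$ across the cut to exceed what $r-t+1$ colours, each of forest size below $e_{t-1}$, can carry. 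I expect this counting — getting the factor $t$ rather than $1$, with the constant $2$ — to be the delicate step. If the top-$(t-1)$ choice fails, the fallback is a greedy process. At each of $t-1$ rounds, add the colour with the largest forest across the current smallest component, and track the total number of merged vertices, which should reach about $nt/(2r)$ within a single colour by averaging.
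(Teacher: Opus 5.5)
There is a genuine gap. Your reduction step is fine. Trimming a colour-$c$ forest to exactly $m=\lceil nt/(2r)\rceil$ edges and contracting the resulting $n-m$ trees is equivalent to the paper's choice of a set $X$ of exactly $n-m$ vertices meeting every component of $\mathcal F_c$, and both avoid needing monotonicity of $A$ in $n$.

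The gap is the dichotomy, which carries all the content of the lemma and which you leave unproven. The tools you propose do not obviously give the factor $t$. Your cut-counting yields a total spanning-forest size of at least $n/2$ across the cut, carried by the $r-t+1$ colours outside $S$. Pigeonholing then only gives a colour with about $n/(2(r-t+1))$ forest edges. This is strictly less than $nt/(2r)$ whenever $1<t<r$, since the difference has the sign of $(t-1)(r-t)$. The top-$(t-1)$ and greedy variants are only sketched, and you yourself flag that this is where the argument may fail.

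The missing idea is local and needs no cut-counting. For any vertex $v$, the star at $v$ is a spanning tree whose colours are exactly the colours incident to $v$. So if no $t-1$ colours connect $G_\chi$, every vertex is incident to at least $t$ distinct colours.

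If $v$ is incident to colour $c$, then $v$ is not isolated in the colour-$c$ graph. Hence $v$ has degree at least $1$ in a maximal forest $\mathcal F_c$ of that colour. Summing degrees over all vertices and colours gives $2\sum_{c}|E(\mathcal F_c)|\ge nt$. Pigeonholing over the $r$ colours then yields a colour with $|E(\mathcal F_c)|\ge\lceil nt/(2r)\rceil$.

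So the factor $t$ comes from degree counting at every vertex, not from the global structure of $t-1$ disconnected colour classes. With this observation in place, your reduction step completes the proof.
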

\begin{proof}
Consider any edge-colouring of the complete graph on the vertex set $[n]$ with colours in $[r]$ that requires one to use $A(n, r)$ colours to form a connected subgraph.
If, for some vertex, the edges incident to it span fewer than $t$ distinct colours, we can connect the graph using these colours, and conclude $A(n, r) \leq t-1$ immediately. \\\\
Thus we may now assume that each vertex is incident to at least $t$ differently coloured edges. For each colour $c \in \{1, 2, \dots, r\}$ let $\mathcal F_c$ denote a maximal forest on the subgraph of the $K_n$ consisting of edges coloured $c$. Consider the set \[S = \{(u, v, c) \in [n] \times [n] \times [r] \bigm| u \ne v, uv \in E(\mathcal F_c)\}.\]
Note that each vertex $u \in [n]$ is incident to edges spanning at least $t$ distinct colours, and for each such colour $c$ there is some vertex $v$ so that $(u, v, c) \in S$. Hence $|S| \geq nt$. \\

\noindent
On the other hand, for each colour $c$ and each edge 
$uv$ of $E(\mathcal F_c)$ there are exactly two corresponding elements in $S$, namely $(u, v, c)$ and $(v, u, c)$. Therefore, we have
\[|E(\mathcal F_1)| + \dots + |E(\mathcal F_r)| = \frac{|S|}{2} \geq \frac{nt}{2},\]
that is, there are at least $nt/2$ edges of the $K_n$ in the union of the forests $\mathcal F_c$. From the pigeonhole principle, it follows that there is some colour $c \in [r]$ for which $|E(\mathcal F_c)| \geq \lceil nt/2r \rceil$. Since $\mathcal F_c$ is a forest on $n$ vertices, it follows that it has $\leq n - \lceil \frac{nt}{2r} \rceil$ connected components. \\

\noindent
Thus, we can pick a subset $X$ of $[n]$ of size exactly $m =n - \lceil \frac{nt}{2r} \rceil$, so that each connected component has nonzero intersection with $X$. By definition, we can connect the induced subgraph on $X$ using a monochromatic subgraph on at most $A(m, r)$ colours. Adding in colour $c$ (if necessary), we deduce $A(n, r) \leq A(m, r)+1$. This establishes the lemma.
\end{proof}

\noindent
Our goal will now be to repeatedly use \hyperref[lem:2:2]{Lemma 2.2} to establish the desired upper bound on $A$.

\begin{lemma}
\label{lem:2:3}
    Let $n, r$ be positive integers with $\sqrt{2r} < n < (1.01)^r$. Then $A(n, r) \leq 5 \sqrt{r \log(n^2/r)}$.
\end{lemma}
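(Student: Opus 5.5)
We will prove the bound by iterating Lemma 2.2 with a single well-chosen value of $t$, and then finish with Lemma 2.1 once the number of vertices has become small. Put $L = \log(n^2/r)$, which is positive because $n > \sqrt{2r}$. We take $t = \lceil \sqrt{rL}\,\rceil$. We first check that $t \le r$, since Lemma 2.2 needs this. From $n < (1.01)^r$ we get $L < 2r\log(1.01) < r$, so $\sqrt{rL} < r$ and $t \le r$. If $\sqrt{rL}\ge r/5$ the claim already follows from Lemma 2.1, since then $A(n,r)\le r\le 5\sqrt{rL}$. So we may also assume $t$ is comparatively small.

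Next we iterate Lemma 2.2 with this fixed $t$. Starting from $n_0=n$, set $n_{i+1} = n_i - \lceil n_i t/(2r)\rceil \le n_i(1 - t/(2r))$. Each application either ends with the bound $t-1$ or adds $1$ and replaces $n_i$ by $n_{i+1}$. After $k$ steps we therefore get
\[A(n,r) \le \max\bigl(t-1,\; k + A(n_k,r)\bigr), \qquad n_k \le n\,e^{-kt/(2r)}.\]
We stop once $n_k \le \sqrt{r}$ and then use $A(n_k,r)\le n_k\le\sqrt r$ from Lemma 2.1. The required number of steps is
\[k = \left\lceil \frac{2r}{t}\log\frac{n}{\sqrt r}\right\rceil = \left\lceil \frac{r}{t}\, L\right\rceil \le \sqrt{rL}+1.\]
The equality uses $\log(n/\sqrt r) = L/2$, and the inequality uses $t \ge \sqrt{rL}$. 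Altogether,
\[A(n,r) \le \max\bigl(\sqrt{rL},\; \sqrt{rL} + 1 + \sqrt r\bigr).\]

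The main obstacle is the additive $\sqrt r$ from the final step, together with the rounding terms. When $L$ is bounded below, say $L\ge \log 2$ (which holds because $n^2/r>2$), we have $\sqrt r \le \sqrt{rL/\log 2} \approx 1.2\sqrt{rL}$. Hence $\sqrt{rL}+1+\sqrt{r} \le 5\sqrt{rL}$ provided $1 \le \sqrt{rL}$, and this holds for all but tiny cases. In those tiny cases ($r L<1$) we have $r$ small and $n$ small, and the bound follows from $A\le \min(n,r)$. One caveat is that we must keep $n_i$ an integer and stop as soon as $n_i \le \sqrt r$. This works because each step takes $n_i$ to an integer at most $n_i(1-t/2r)$, and the stopping time is at most $k$, so the estimate above still applies. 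It remains to check the constant $5$ carefully, using $L > \log 2$ and $\lceil\cdot\rceil\le \cdot+1$. If that check is tight, a cleaner fallback is to stop at $n_k\le \sqrt{rL}$ instead of $\sqrt r$. This costs only $O(r\log L/t)$ extra steps and removes the need to compare $\sqrt r$ with $\sqrt{rL}$.
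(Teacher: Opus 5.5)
Your plan follows the paper's proof: the same $t=\lceil\sqrt{rL}\,\rceil$ with $L=\log(n^2/r)$, the same sequence $n_{i+1}=n_i-\lceil n_it/(2r)\rceil$, stopping once $n_k\le\sqrt r$ (within $t$ steps), and finishing with $A(n_k,r)\le n_k$ from Lemma 2.1. However, one step is wrong as written.

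\textbf{The unrolling step.} The recursion $A(n_i,r)\le\max\bigl(t-1,\,1+A(n_{i+1},r)\bigr)$ does not give $A(n,r)\le\max\bigl(t-1,\,k+A(n_k,r)\bigr)$. If the alternative $A(n_j,r)\le t-1$ occurs at stage $j$, it only yields $A(n,r)\le j+t-1$, because the $+1$'s accumulate on that branch too. The correct unrolling is
\[
A(n,r)\le\max\bigl(t+k-2,\;k+A(n_k,r)\bigr).
\]
The paper uses the cruder bound $t+k+A(n_k,r)$. Your final display should therefore read $\max\bigl(2\sqrt{rL},\,\sqrt{rL}+1+\sqrt r\bigr)$ rather than $\max\bigl(\sqrt{rL},\,\sqrt{rL}+1+\sqrt r\bigr)$. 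This is still at most $5\sqrt{rL}$ by your comparison $\sqrt r\le\sqrt{rL/\log 2}$, so the conclusion survives once the bookkeeping is fixed.

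\textbf{Unneeded cases.} Your side cases and the fallback can be dropped:
\begin{itemize}
\item From $L<2r\log(1.01)$ you get $\sqrt{rL}<0.15\,r<r/5$, so the case $\sqrt{rL}\ge r/5$ never occurs.
\item The hypothesis $\sqrt{2r}<n<(1.01)^r$ cannot hold unless $r>300$, so $rL<1$ never occurs either.
\end{itemize}
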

\begin{proof}
Denote $t = \left\lceil \sqrt{r \log(n^2/r)} \right\rceil$ and note that $t \leq r$ since $n < (1.01)^r$. Define a sequence $\{n_i\}_{i \geq 0}$ given by $n_0 = n$ and $n_{i+1} = n_i - \lceil \frac{n_it}{2r} \rceil$ for all $i \geq 0$. Observe that for all $i \geq 1$, we have \[n_{i+1} \leq n_i - \frac{n_it}{2r}= n_i\left(1-\frac{t}{2r}\right) \leq n_i e^{-t/2r}\]
and thus by induction $n_k \leq n \exp(-tk/2r)$ for $k \geq 0$. Let $x$ be minimal for which $n_x \leq  \sqrt{r}$. Since we have $n_t \leq n \exp(-t^2/2r) \leq n \exp(-\log(n^2/r)/2) = \sqrt{r}$, it follows that $x \leq t$.\\

\noindent
Note that \[A(n_i, r) \leq \max(t-1, 1+A(n_{i+1}, r)) \] for all $0 \leq i < x$ from \hyperref[lem:2:2]{Lemma 2.2} applied to $n_i, r, t$.\\

\noindent
We argue by induction that for all $0 \leq i \leq x$ we have $A(n_{x-i}, r) \leq t+i+A(n_x, r)$. \\

\noindent
The base case $i =0$ is immediate. For $1 \leq i \leq x$, we have \[A(n_{x-i}, r) \le  \max(t-1, 1+A(n_{x-i+1}, r)) \leq \max(t-1, 1+t+(i-1) +A(n_x, r)) = t+i + A(n_x, r),\]
where the first estimate follows from \hyperref[lem:2:2]{Lemma 2.2} while the second estimate follows from the inductive hypotheses. We deduce that \[A(n, r) = A(n_0, r) \leq t+x+A(n_x, r) \leq t+t+n_x \leq 2 \left\lceil \sqrt{r \log(n^2/r)}\right \rceil + \sqrt{r} \leq 5\sqrt{r \log(n^2/r)}.\]
Note that we make use of $n_x \leq \sqrt{r}$, $x \leq t$ and $n > \sqrt{2r}$ in the above. This establishes the lemma.
\end{proof}
\noindent
Now observe that \hyperref[lem:2:1]{Lemma 2.1} implies the desired upper bounds for $n \leq \sqrt{2r}$ and $n \geq (1.01)^r$ since we have both $A(n, r) \leq n$ and $A(n, r) \leq r$, while \hyperref[lem:2:3]{Lemma 2.3} implies the desired upper bound $A(n, r) = O \left(\sqrt{r \log(n^2/r)}\right)$ in the regime $\sqrt{2r} < n < (1.01)^r.$ \\

\noindent
Thus we have retrieved all the desired upper bounds in \hyperref[thm:1:1]{Theorem 1.1}.
\section{Lower Bounds for \texorpdfstring{$B(n, r)$}{B(n, r)}}
We first show that $B(n, r)$ is an increasing function in both arguments.
\begin{lemma}
\label{lem:3:1} 
Let $n, r$ be positive integers. We have $B(n, r) \le B(n, r+1)$ and $B(n, r) \leq B(n+1, r)$.
\end{lemma}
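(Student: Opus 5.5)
Both inequalities follow by taking an extremal colouring for the smaller parameters and modifying it into a colouring for the larger ones whose optimum is at least as large. Recall that $B(n,r)$ is the maximum, over colourings $\chi$, of the minimum size of a colour set $S$ such that $G_\chi[S]$ has no isolated vertices. It therefore suffices to exhibit, for each colouring $\chi$ of the smaller instance, a colouring $\chi'$ of the larger instance with optimum at least that of $\chi$.

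\emph{Monotonicity in $r$.} Let $\chi\colon E(K_n)\to[r]$ attain $B(n,r)$. We view $\chi$ as a colouring with palette $[r+1]$ in which colour $r+1$ is simply never used. The definition of $B(n,r+1)$ only requires the colouring to take values in $[r+1]$, not to be surjective, so this is a legitimate choice. The colour sets $S\subseteq[r+1]$ that cover all vertices are exactly the sets $S'\cup T$ with $S'\subseteq[r]$ covering and $T\subseteq\{r+1\}$, since colour $r+1$ contributes no edges. Hence the minimum is unchanged, and $B(n,r+1)\ge B(n,r)$.

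\emph{Monotonicity in $n$.} Let $\chi$ on $K_n$ with vertex set $[n]$ attain $B(n,r)$. Add a new vertex $w$ as a twin of vertex $1$: set $\chi'(wv)=\chi(1v)$ for $v\in[n]\setminus\{1\}$, and give $\chi'(w1)$ any colour, say $\chi(12)$ if $n\ge2$. Now let $S$ be a colour set for which $G_{\chi'}[S]$ has no isolated vertices. We claim $G_\chi[S]$ has none either. Every vertex $v\in[n]\setminus\{1\}$ is incident in $G_{\chi'}[S]$ to some edge $vu$. If $u\ne w$, that edge already lies in $K_n$. If $u=w$, then $\chi(v1)=\chi'(vw)\in S$, so $v$ is covered by the edge $v1$ in $K_n$. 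Vertex $1$ needs more care, because its covering edge in $\chi'$ might be $1w$. The remedy is to choose $\chi'(1w)$ so that this edge gives no extra help: if $\chi'(1w)=\chi(12)=\chi'(w2)$, then whenever $1w$ is present, $w$'s colour $\chi(12)$ lies in $S$, and so the edge $12$ of $K_n$ covers vertex $1$. Consequently every covering set for $\chi'$ is a covering set for $\chi$, and $B(n+1,r)\ge B(n,r)$.

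The only delicate point is the twin edge $1w$, which is handled by colouring it with a colour already present between $1$ and some other original vertex. The degenerate case $n=1$ is trivial, since $B(1,r)$ is then at most $1$ by convention ($s\ge1$).
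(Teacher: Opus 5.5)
Your proof is correct and follows the paper's argument: an unused extra colour for monotonicity in $r$, and a twin vertex for monotonicity in $n$. The paper clones vertex $n$ and colours $\{n,n+1\}$ with $\chi(\{1,n\})$, while you clone vertex $1$ and use $\chi(12)$; in both cases every original vertex sees the same set of colours as before, your edge-by-edge check is just an unpacked form of that observation, and the $n=1$ case rests on a convention in both treatments.
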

\begin{proof}
Let $\chi : \binom{[n]}{2} \mapsto [r]$ be an $r$-edge-colouring of the $K_n$ for which we must use $B(n,r)$ colours to ensure that no vertex remains isolated. By interpreting $\chi$ as a function with codomain $[r+1]$, in which the final colour is unused, we get an $(r+1)$-edge colouring of $K_n$ that demonstrates $B(n,r+1) \geq B(n,r)$. \\

\noindent
For $n = 1$, it is clear that \[0 = B(1, r) \leq B(2, r) = 1\] for all $r \geq 1$. Assume $n \geq 2$ and define a colouring $\chi' : \binom{[n+1]}{2} \mapsto [r]$ by: \begin{itemize}
    \item $\chi'(\{i, j\}) = \chi(\{i, j\})$ for all $1 \leq i < j \leq n$.
    \item $\chi'(\{i, n+1\}) = \chi(\{i, n\})$ for all $1 \leq i < n$.
    \item $\chi'(\{n, n+1\}) = \chi(\{1, n\})$.
\end{itemize}
Essentially, we copy the colour pattern of edges incident on $n$ and add a new vertex with an identical colour pattern. Note that all vertices $1 \leq x \leq n$ are incident to the same set of colours in $\chi'$ as in $\chi$. Thus if fewer than $B(n,r)$ colours were to suffice in ensuring no vertex is isolated in the $K_{n+1}$ under the colouring $\chi'$, the same would be true for the $K_n$ under colouring the $\chi$. This promptly implies $B(n+1, r) \geq B(n, r)$.
\end{proof}
To show the lower bounds in \hyperref[thm:1:1]{Theorem 1.1}, we break into the three regimes for $n, r$.
\subsection*{Case I: $n \leq \sqrt{2r}$}
\noindent
In this case, we note that $\binom{n}{2} < \frac{1}{2}n^2 \leq r$, and thus we can assign distinct colours to all edges of the $K_n$. It follows that one needs at least $\lceil n/2 \rceil$  colours to ensure no vertex is isolated, since any given colour does this for at most two vertices. And thus $B(n, r) = \Omega(n)$ in this case. 
\subsection*{Case II: $\sqrt{2r} < n  < (1.01)^r$}
The lower-bound construction in this ``main'' regime is best thought of as a blow-up of the intersecting family $\binom{[2b+1]}{b+1}$, by taking $a$ copies of it. We use this family because it has many elements (roughly $(4+o(1))^b$) and also a large transversal number (equal to $b+1$). Thus $b$ supplies the exponential scale, while $a$ provides a polynomial blow-up that allows us to control the desired number of vertices and colours. In this sense, the construction interpolates between the ``square-root'' regime and the more delicate regime where $n$ is exponential in $r$. In \hyperref[sec:4]{Section 4}, one may find more motivation on where the family $\binom{[2b+1]}{b+1}$ seems to come from.\\

\noindent
Let us now introduce the formal statement of the lemma which is the key driving force behind the lower bounds in this regime.
\begin{lemma}
\label{lem:3:2}
Let $a, b$ be positive integers. Then $B\left(a\binom{2b+1}{b+1}, \binom{a+1}{2}(2b+1)\right) \geq \frac{1}{2}a(b+1)$.
\end{lemma}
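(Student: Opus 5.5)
The plan is an explicit colouring in which vertices are labelled by (copy, set) and colours by (pair of copies, element), after which a double-counting of transversals gives the bound.

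\textbf{The colouring.} Write $N=2b+1$ and $\mathcal{K}=\binom{[N]}{b+1}$. Any two members of $\mathcal{K}$ intersect, since two $(b+1)$-subsets of a $(2b+1)$-set cannot be disjoint. Take the vertex set
\[
V=[a]\times\mathcal{K},
\]
so that $|V|=a\binom{2b+1}{b+1}$. Take the colour set
\[
C=\bigl\{(\{i,j\},x): 1\le i\le j\le a,\ x\in[N]\bigr\},
\]
where $\{i,j\}$ ranges over unordered pairs with repetition allowed. There are $\binom{a}{2}+a=\binom{a+1}{2}$ such pairs, so $|C|=\binom{a+1}{2}(2b+1)$. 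For distinct vertices $(i,F)$ and $(j,G)$ we have $F\cap G\neq\emptyset$. We colour the edge between them by $(\{i,j\},x)$, where $x=\min(F\cap G)$; this covers the case $i=j$, $F\neq G$ as well. The colouring is well defined and uses only colours from $C$.

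\textbf{The key property.} A colour $(\{i,j\},x)$ is incident only to vertices $(k,F)$ with $k\in\{i,j\}$ and $x\in F$. This holds because every edge of that colour joins a copy-$i$ vertex to a copy-$j$ vertex, and both endpoint sets contain $x$.

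\textbf{The lower bound.} Let $S\subseteq C$ be a set of colours such that $G_\chi[S]$ has no isolated vertices. For each $k\in[a]$ define
\[
X_k=\bigl\{x\in[N]: (\{k,j\},x)\in S\text{ for some } j\bigr\}.
\]
Every vertex $(k,F)$ is incident to some colour of $S$. By the key property, that colour has the form $(\{k,j\},x)$ with $x\in F$, so $F\cap X_k\neq\emptyset$ for every $F\in\mathcal{K}$. In other words, $X_k$ is a transversal of $\mathcal{K}$.

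Any transversal of $\mathcal{K}$ has at least $b+1$ elements. Indeed, if $|X_k|\le b$, then $[N]\setminus X_k$ has at least $b+1$ elements and so contains a member of $\mathcal{K}$ that avoids $X_k$. Hence $|X_k|\ge b+1$ for every $k$.

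Each colour $(\{i,j\},x)\in S$ contributes the single element $x$ to at most two of the sets $X_k$, namely $X_i$ and $X_j$. Therefore
\[
2|S|\ \ge\ \sum_{k=1}^a |X_k|\ \ge\ a(b+1),
\]
which gives $|S|\ge \tfrac12 a(b+1)$. Since this holds for every valid $S$, we conclude $B\bigl(a\binom{2b+1}{b+1},\binom{a+1}{2}(2b+1)\bigr)\ge\tfrac12 a(b+1)$.

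\textbf{Main obstacle.} The only delicate point is the design of the colouring: each colour must be "local", touching at most two copies and only those sets containing its element $x$. That locality is what lets the transversal bound in each copy be charged at most twice per colour. Once the colouring is set up this way, the rest is the short double count above.
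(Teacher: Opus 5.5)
Your proof is correct and is essentially the paper's argument. Your diagonal pairs $\{i,i\}$ play the role of the paper's pairs $\{0,i\}$, and otherwise it is the same $\min(F\cap G)$ colouring, the same bound of $b+1$ on each per-copy transversal, and the same double count charging each colour to at most two copies.
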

\begin{proof}
Let $\mathcal F$ denote the set of all $(b+1)$-element subsets of $[2b+1]$, and let $\mathcal G$ denote the set of all $2$-element subsets of $\{0\} \cup[a]$.  Our vertex set will be given by $[a] \times \mathcal F$, while our colours will come from $\mathcal G \times [2b+1]$. \\

\noindent
The colouring $\chi : \binom{[a] \times \mathcal F}{2} \mapsto \mathcal G \times [2b+1]$ is described by:
\begin{itemize}
    \item $\chi((i, S_1), (i, S_2)) = (\{0,i\}, \min(S_1 \cap S_2))$ for all $i \in [a]$ and $S_1 \ne S_2 \in \mathcal F$.
    \item $\chi((i, S_1), (j, S_2)) = (\{i,j\}, \min(S_1 \cap S_2))$ for all $i \ne j \in [a]$ and $S_1, S_2 \in \mathcal F$.
\end{itemize}
We note that $\chi$ is well-defined since any two elements of $\mathcal F$ intersect. Let us also remark that the choice of ``$\min$'' for the second coordinate is arbitrary, the argument that follows only requires that this coordinate be contained in $S_1 \cap S_2$. \\

\noindent
Let $C \subseteq \mathcal{G} \times [2b+1]$ be a set of colours such that each vertex has an incident edge with colour in $C$, that is, no vertex is left isolated. Denote by $X$ the set of pairs $(x, c) \in [a] \times [2b+1]$ for which there exists $x'$ so that $(\{x, x'\}, c) \in C$. Note that each element $(\{x, y\}, c)$ gives rise to at most two such pairs in $X$, namely, $(x, c)$ and $(y, c)$ provided both $x, y \ne 0$. Hence, $|X| \leq 2|C|$. \\

\noindent
On the other hand, we claim that for each $x \in [a]$ there must be at least $b+1$ elements $c$ of $[2b+1]$ with $(x,c) \in X$. If this were not the case, then we could find a set $S$ with at least $(2b+1)-b = b+1$ elements so that no element in $X$ is of the form $(x, c)$ with $c \in S$. However, this would imply that $(x, S)$ is isolated, a contradiction. \\

\noindent
It follows that $2|C| \geq |X| \geq a \cdot (b+1)$, and thus to ``cover'' all vertices one must use at least $\frac{1}{2}a(b+1)$ colours, proving the lemma.
\end{proof}
\noindent
Now, let $n, r$ be positive integers so that $\sqrt{2r} < n < (1.01)^r$. Define $x = n^2/r$, so that $x > 2$ and we have to establish $B(n, r) = \Omega \left(\sqrt{r \log x}\right)$. We saw in the previous case that $B(\lfloor \sqrt{2r} \rfloor, r) = \Omega(\sqrt{r})$ and thus for $x \in (2,16]$, that is, for $\sqrt{2r} \leq n \leq 4 \sqrt{r}$ we have $B(n, r) = \Omega(\sqrt{r \log x})$ since $B$ is increasing in its first argument. Thus, in what follows, we may assume $x > 16$. \\

\noindent
We choose $b = \lfloor \log_{16} x \rfloor$ and $a = \left\lfloor \sqrt{\frac{r}{4(2b+1)}} \right\rfloor$. Note that $b \geq 1$ as $x > 16$, and since $n < (1.01)^r$, we have \[b = \lfloor \log_{16}(n^2/r) \rfloor \leq \log_{16}(n^2) \leq 2r\log_{16} (1.01) \leq \frac{r}{12}\]
    and thus $r \geq 8b + 4$, implying $a \geq 1$. We also have \[a \binom{2b+1}{b+1} \leq (r/4)^{1/2} \cdot 2^{2b+1} =r^{1/2} \cdot 4^b \leq r^{1/2} (n^2/r)^{\log_{16} 4} = n,\]
    and \[\binom{a+1}{2} (2b+1) \leq a^2 \cdot (2b+1) \leq \frac{r}{4(2b+1)} \cdot (2b+1) \leq r.\]
    From \hyperref[lem:3:1]{Lemma 3.1} and \hyperref[lem:3:2]{Lemma 3.2}, it follows that $B(n, r) \geq \frac{1}{2}a(b+1)$. Since $a = \Theta \left(\sqrt{r/b}\right)$ and $b = \Theta(\log x)$, it follows that $ab = \Theta \left(\sqrt{r \log x}\right)$ and hence $B(n, r) = \Omega\left(\sqrt{r \log x} \right) = \Omega \left(\sqrt{r \log(n^2/r)}\right)$, which settles this regime.
\subsection*{Case III: $n \geq (1.01)^r$}
From the previous Case, we saw that $B(\lfloor(1.01)^r\rfloor, r) = \Omega \left(\sqrt{r \log (((1.01)^r)^2/r)}\right) = \Omega(r)$. Since $B$ is increasing in the first coordinate, it follows that $B(n, r) = \Omega(r)$ for all $n \ge (1.01)^r$, which settles this regime as well.\\

\noindent
While the above three cases together prove the desired lower bounds in \hyperref[thm:1:1]{Theorem 1.1}, we provide an alternate randomised construction for the regime $r^{1/2+\varepsilon} \leq n < (1.01)^r$ for any $\varepsilon \in (0,0.1)$. Note that in this regime our goal becomes to show that $B(n, r) = \Omega_{\varepsilon} \left(\sqrt{r \log n}\right)$. We assume $r$ to be sufficiently large in what follows.\\

\noindent
\textbf{\underline{\textit{Alternate Construction for the Lower Bound}}} \\
\noindent
Stipulate $s = \left \lceil 2\sqrt{ r \log n} \right\rceil $. Sample sets $S_1, S_2, \dots, S_n$ independently and uniformly at random from $\binom{[r]}{s}$. We show that with probability more than $1-o_r(1)$ each, both the following events occur: 
\begin{itemize}
    \item For any two sets $S_i, S_j$ we have $S_i \cap S_j \ne \varnothing$.
    \item No set of size $\leq 0.1\varepsilon \sqrt{r \log n}$ intersects all sets $S_i$.
\end{itemize}
The probability that two randomly chosen subsets of size $s$ of $[r]$ are disjoint (for $s < r/2$) is given by \[\frac{\binom{r-s}{s}}{\binom{r}{s}} = \frac{(r-s)(r-s-1)\dots(r-2s+1)}{r(r-1)\dots (r-s+1)} = \prod \limits_{x = r-s+1}^{r} \left(1 - \frac{s}{x}\right) \leq \left(1-\frac{s}{r}\right)^s \leq \exp(-s^2/r),\]
and thus from a union bound the probability that the first condition fails is at most $n^2 \exp(-s^2/r) \leq n^{-2} = o_n(1) = o_r(1)$. \\

\noindent
Denote $t = \lfloor 0.1\varepsilon \sqrt{r \log n}\rfloor$. For any given subset of $[r]$ of size $t$, the probability that it intersects a given $S_i$ is given by (note that $2s+10t < r$ due to $n < (1.01)^r$ and $\varepsilon < 0.1$) \[1 - \frac{\binom{r-t}{s}}{\binom{r}{s}} = 1- \prod \limits_{x = r-s+1}^{r} \left(1 - \frac{t}{x}\right) \leq 1 - \left(1 - \frac{t}{r-s+1}\right)^s \leq 1-\left(1 - \frac{2t}{r}\right)^s \leq 1 - e^{-4st/r},\]
where at the end we use the estimate $1-x \geq e^{-2x}$ for $x \in (0,0.5)$. Since $st \leq 0.4\varepsilon r \log n$, the above probability is no more than $1-n^{-1.6\varepsilon}$. Thus the probability a given set of size $t$ meets all sets $S_1, S_2, \dots, S_n$ is at most $\left(1 - n^{-1.6\varepsilon}\right)^n$ and thus from a union bound the probability that the second condition fails is at most \[r^t (1-n^{-1.6 \varepsilon})^n \leq r^{0.1\varepsilon \sqrt{r \log n}} \exp\left(-n^{(1-1.6\varepsilon)}\right) \leq e^{0.1\varepsilon\sqrt{r \log n} \log r } \exp \left(-n^{(1-1.6\varepsilon)}\right).\]
Since $n > r^{1/2+\varepsilon}$, $\sqrt{r \log n} \log r \leq n^{\frac{1}{1+2\varepsilon} +o_{n,r}(1)}$ and thus the probability of the second condition failing is \[\leq e^{0.1\varepsilon n^{\frac{1}{1+2\varepsilon}+o_{n,r}(1)}} \exp\left(-n^{(1-1.6\varepsilon)}\right) = \exp\left(0.1\varepsilon n^{\frac{1}{2+\varepsilon} + o_{n,r}(1) } - n^{(1-1.6\varepsilon)}\right) = o_{n,r}(1) = o_r(1),\]
where we use $\varepsilon < 0.1$ to deduce $1-1.6\varepsilon > \frac{1}{1+2\varepsilon}$. (note that $n > r^{1/2+\varepsilon}$ implies $o_{n,r}(1) = o_{r}(1)$) \\

\noindent
Thus, each of the two events has probability of failure $o_r(1)$, and in particular, by another union bound, for large enough $r$ we can find sets $S_1, S_2, \dots, S_n \in \binom{[r]}{s}$ that obey the two properties. Define now the $r$-edge colouring $\chi : \binom{[n]}{2} \mapsto [r]$ as $\chi(\{i, j\}) = \min(S_i \cap S_j)$ for all $1 \leq i < j \leq n$ (as earlier, the choice of $\min$ can be made arbitrary). Since any two of the sets $S_i$ intersect, this colouring is well defined, and moreover one must select $> 0.1\varepsilon\sqrt{r \log n}$ colours to ensure that no vertex remains isolated, as no subset of $[r]$ with size $\leq 0.1\varepsilon \sqrt{r \log n}$ intersects all the $S_i$. Thus this colouring exhibits $B(n, r) = \Omega_{\varepsilon}(\sqrt{r \log n})$.
\section{Further Remarks and an Exact Threshold}
\label{sec:4}
Recall that $B(n,r)$ is nondecreasing in the first coordinate. Since one always has $B(n,r)\leq r$, the limit
\[
\lim_{n\to\infty} B(n,r)
\]
exists and is finite. In fact, this limit is exactly $\lceil r/2\rceil$, and we give a short argument for this below. In particular, this also explains why the family $\binom{[r]}{\lceil r/2\rceil}$ naturally appears later on. \\

\noindent
By arbitrarily dividing the set of colours into two parts of roughly equal size, and applying the classical result of Erd\H{o}s and Rado, we obtain the upper bound
$
A(n,r)\leq \lceil r/2 \rceil,
$
and thus also
$
B(n,r)\leq \lceil r/2 \rceil.
$ \\

\noindent
For the lower bound, when $r$ is odd we take the vertex set $V = \binom{[r]}{\lceil r/2 \rceil}$
and colour the edge $S_1S_2 \in \binom{V}{2}$ with any element of $S_1\cap S_2$. In this construction, one needs at least $\lceil r/2 \rceil$ colours to ensure that no vertex is isolated: indeed, any set of colours of size $\lfloor r/2 \rfloor$ leaves exactly one vertex isolated, namely the one described by its complement. Thus the limit of $B(\cdot,r)$ is $\lceil r/2 \rceil$ when $r$ is odd, and slight tweaks of this argument extend the statement to even $r$ as well. Since $A(n,r)\geq B(n,r)$, it follows that
\[
\lim_{n\to\infty} A(n,r)=\lceil r/2 \rceil = \lim \limits_{n \to \infty} B(n,r)
\]
as well. \\

\noindent
A more interesting question is to determine the least $n$ for which this eventual limit is attained. That is, given $r\geq 1$, what is the least $n$ such that $B(n,r)=\lceil r/2 \rceil$? It turns out that when $r$ is odd, the construction above is optimal, and the least such $n$ is the central binomial coefficient $
\binom{r}{\lceil r/2 \rceil}.$ The even-$r$ case appears to be similar, but we do not pursue it here.\\

\noindent
The argument to show the above is straightforward: note that a given vertex is isolated by at most one subset of size $\lfloor r/2 \rfloor$, but each of the $\binom{r}{\lfloor r/2 \rfloor}$ choices of colours is known to give some isolated vertex, and thus we obtain $n \geq \binom{r}{\lfloor r/2 \rfloor}$. One can ask the same question for $A$, and this is even more interesting. Clearly this answer is at most the corresponding answer for $B$, but it is not obvious whether the two always coincide.\\

\noindent
Interestingly, the two answers do coincide. More precisely, one has: 

\begin{theorem}
\label{thm:4:1}
For any odd integer $r \geq 3$, the least $n$ for which $A(n,r)= \lceil \frac{r}{2} \rceil$ is $\binom{r}{\lceil r/2 \rceil}$.
\end{theorem}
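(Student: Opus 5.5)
Write $r=2k+1$, so $\lceil r/2\rceil=k+1$ and $\binom{r}{\lceil r/2\rceil}=\binom{r}{k}$. There are two directions.

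\emph{Value at $n=\binom{r}{k}$.} Since $B\le A\le k+1$ always (Erd\H{o}s--Rado applied to a split of the colours into parts of sizes $k$ and $k+1$), and the construction of Section~4 on $V=\binom{[r]}{k+1}$ gives $B=k+1$, we get $A\left(\binom{r}{k},r\right)=k+1$.

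\emph{Minimality.} It remains to show that $A(n,r)\le k$ whenever $n<\binom{r}{k}$. Equivalently: if $\chi$ is an $r$-colouring of $K_n$ in which every $k$-set $S$ of colours has $G_\chi[S]$ disconnected, then $n\ge\binom{r}{k}$. This also covers all smaller $n$, since a failing colouring of $K_n$ would already contradict the claim. I would mimic the counting argument for $B$, replacing ``$S$ isolates $v$'' by a connectivity-based witness.

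The key structural fact is as follows. If $G_\chi[S]$ is disconnected with a component $K$, then every edge between $K$ and $V\setminus K$ has its colour in $T=[r]\setminus S$, with $|T|=k+1$. For each $k$-set $S$, I would pick a component $K_S$ of $G_\chi[S]$ and a vertex $v_S\in K_S$. A natural choice is $K_S$ of minimum size, with $v_S$ chosen canonically, for instance minimising the number of colours it sees. Define
\[
B_S=\{\chi(v_Sw): w\notin K_S\}\subseteq T .
\]
Then $B_S\cap S=\varnothing$ and $|B_S|\le k+1$. The aim is a set-pair system $(S,B_S)$ satisfying a cross-intersection condition, either $S\cap B_{S'}\neq\varnothing$ for all $S\ne S'$ or the skew version for $S$ preceding $S'$ in some order. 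Bollob\'as's theorem, or the Lov\'asz--Frankl skew version, would then give that the number of \emph{distinct} pairs is at most $\binom{2k+1}{k}$.

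I would invert this logic to obtain injectivity. The goal is to show that if two $k$-sets $S\ne S'$ receive the same witness vertex $v$, this contradicts the failure of some third $k$-set. Specifically, I would try to show that $G_\chi[S\cup S']$, which has at least $k+1$ colours, connects $v$ to everything. I would then use Erd\H{o}s--Rado on $S\cup S'$ versus its complement, together with an exchange step $S''=S-c+d$, to produce a connected $k$-colour subgraph. If the map $S\mapsto v_S$ is injective, then $n\ge\binom{r}{k}$ follows at once.

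The main obstacle is exactly this step: a disconnected $G_\chi[S]$ need not have an isolated vertex, so the one-line injection used for $B$ breaks down. What I would try first is a shifting/induction argument on $r$. Delete a colour $c$ and merge it into another colour (or restrict to a suitable vertex subset), apply the statement for $r-2$, and track how components of $G_\chi[S]$ evolve when one colour of $S$ is exchanged for one of $T$. The hope is to prove the stronger statement that, among all failing $k$-sets, the minimal components can be chosen pairwise distinct as vertex sets and each contains a private vertex. If that fails, the fallback is Lov\'asz's exterior-algebra proof of the skew Bollob\'as inequality, applied to pairs (colours at $v$, $S$) indexed by vertices, to bound the number of failing $k$-sets by $n$.
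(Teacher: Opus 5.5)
\textbf{The minimality direction has a genuine gap.} Your first direction is fine and matches the paper. Splitting the colours and applying Erd\H{o}s--Rado gives $A\le k+1$. The Section~4 construction on $\binom{[2k+1]}{k+1}$ gives $B\ge k+1$, so $A\bigl(\binom{r}{k},r\bigr)=k+1$.

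The minimality direction, however, is not proved. Everything rests on the injectivity of $S\mapsto v_S$, or on the components $K_S$ having private vertices. You explicitly leave this as the ``main obstacle'', with three speculative routes.
\begin{itemize}
\item The Bollob\'as route cannot supply it. The set-pair inequality bounds the number of pairs $(S,B_S)$ from \emph{above} by $\binom{2k+1}{k}$. That is just the number of $k$-sets $S$ and says nothing about $n$. A lower bound on $n$ needs $\binom{2k+1}{k}$ distinct vertices, and ``inverting the logic'' is not an argument.
\item The exterior-algebra fallback likewise specifies no concrete set-pair system with a verified cross-intersection property. The colour set at a vertex has no size control.
\end{itemize}

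\textbf{The missing idea.} The paper's route is structural and needs no canonical choice of component. For each $(k+1)$-set $U$ of colours, fix any cut $(A_U,B_U)$ whose crossing edges all have colours in $U$.

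First, two such cuts never cross. Suppose all four quarters $A_U\cap A_W,\ldots,B_U\cap B_W$ were nonempty. Edges between diagonally opposite quarters have colours in $U\cap W$, so $G_\chi[U\cap W]$ has at most two components, forcing $|U\cap W|=k$. Then $G_\chi[U\cap W]$ has exactly these two diagonal components. Every edge between them has colour in $(U\cup W)\setminus(U\cap W)=\{c_1,c_2\}$, so $G_\chi[\{c_1,c_2\}]$ is connected. This contradicts $k+1>2$; so this needs $k\ge 2$, and $r=3$ is checked directly.

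Second, call a side good if it is disjoint from a side of another cut. Suppose $A_U\cap A_W=\varnothing$, and remove a colour $c\in U\setminus W$. Then $A_U\cup A_W$ lies in one component of the disconnected graph $G_\chi[U\setminus\{c\}]$. Hence some $x\in B_U$ outside that component receives only colour $c$ from all of $A_U$. If both sides of a cut were good, the two resulting monochromatic stars would share an edge, and a single colour would give a connected spanning graph. So each cut has exactly one good side.

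Non-crossing gives, for any two cuts, a disjoint pair of sides, and both of those sides are then good. Therefore the good sides are pairwise disjoint. These are $\binom{2k+1}{k}$ nonempty pairwise disjoint sets, so $n\ge\binom{2k+1}{k}$.

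Your exchange step $S-c+d$ is loosely in the spirit of the non-crossing claim. But without non-crossing and the one-good-side lemma, there is no injection.
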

\noindent
We record here an adaptation of a proof by Abel George Mathew, communicated to us personally. 
\begin{proof}
An easy check settles the case $r = 3$. Henceforth, we write $r = 2k+1$ where $k \geq 2$. Suppose $n$ is a positive integer such that $A(n, 2k+1) = k+1$, and let $\chi : \binom{[n]}{2} \mapsto [2k+1]$ be a colouring that witnesses this and denote $M = \binom{2k+1}{k}$. Recall that for $S \subseteq [2k+1]$, we denote by $G_{\chi}[S]$ the subgraph of the $K_n$ formed by the edges whose colour lies in $S$. \\

\noindent
For each of the $M$ sets of colours in the family $\binom{[2k+1]}{k}$, the graph formed by those colours is disconnected. It follows that we can find partitions $[n] = A_i \sqcup B_i$ for $1 \leq i \leq M$ such that all edges in $E(A_i, B_i)$ draw their colours from $T_i^{c}$ where the $\{T_i\}_{1 \leq i \leq M}$ are the sets from $\binom{[2k+1]}{k}$ indexed in some order. Henceforth, we will let $\{S_i\}_{1 \leq i \leq M}$ denote the sets $S_i = T_i^{c}$ so that $S_1, \dots, S_M$ are all the subsets of $[2k+1]$ of size $k+1$.
\begin{claim}
\label{claim:4:2}
For any $1 \leq i \ne j \leq M$, the partitions $A_i \sqcup B_i$ and $A_j \sqcup B_j$ are ``parallel'', that is, $C_i \cap C_j = \varnothing$, for some choice $C_i \in \{A_i, B_i\}$ and $C_j \in \{A_j,B_j\}$.
\end{claim}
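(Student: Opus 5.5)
We argue by contradiction. Suppose that for some $i\neq j$ all four cells
\[
P=A_i\cap A_j,\quad Q=A_i\cap B_j,\quad R=B_i\cap A_j,\quad U=B_i\cap B_j
\]
are nonempty. We will exhibit a set $T$ of at most $k$ colours with $G_\chi[T]$ connected. Enlarging $T$ to exactly $k$ colours preserves connectivity, so this contradicts $A(n,2k+1)=k+1$, which says every $k$-set of colours leaves the graph disconnected.

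First we record which colours can occur between the cells. An edge crossing $(A_i,B_i)$ has colour in $S_i$, and an edge crossing $(A_j,B_j)$ has colour in $S_j$. Hence edges between $P$ and $U$, and between $Q$ and $R$, cross both partitions and have colours in $I:=S_i\cap S_j$. Edges $P$--$Q$ and $R$--$U$ cross only the $j$-partition, so their colours lie in $S_j$. Edges $P$--$R$ and $Q$--$U$ cross only the $i$-partition, so their colours lie in $S_i$. Since $S_i\neq S_j$ are $(k+1)$-subsets of $[2k+1]$, we have $|I|\le k$.

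The key observation is that $G_\chi[I]$ contains the complete bipartite graphs between $P$ and $U$ and between $Q$ and $R$. So any colour set containing $I$ already makes $P\cup U$ connected and $Q\cup R$ connected. It then only needs one more edge joining these two blocks.

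\emph{Case $|I|\le k-1$.} Pick any edge between $P$ and $Q$; its colour is some $c\in S_j$. Then $T=I\cup\{c\}$ has at most $k$ colours and $G_\chi[T]$ is connected, a contradiction.

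\emph{Case $|I|=k$.} Write $S_i=I\cup\{x\}$ and $S_j=I\cup\{y\}$ with $x\neq y$.
\begin{itemize}
\item If some $P$--$Q$ edge has colour in $I$, then $T=I$ works directly.
\item Otherwise every $P$--$Q$ edge and every $R$--$U$ edge has colour $y$. In this subcase we use $k\ge 2$. Fix one $P$--$U$ edge; its colour is some $z'\in I$. Choose $z\in I$ with $z\neq z'$, and set $T=(I\setminus\{z\})\cup\{y\}$, so $|T|=k$. Colour $y$ makes $P\cup Q$ connected (complete bipartite between nonempty $P,Q$) and likewise $R\cup U$. The single edge of colour $z'\in T$ joins $P$ to $U$. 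Since $P\cup Q\cup R\cup U=[n]$, the graph $G_\chi[T]$ is connected, again a contradiction.
\end{itemize}

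The only delicate point is this last subcase $|I|=k$, where $I$ alone cannot join the two blocks. It is resolved by trading one colour of $I$ for $y$, which is exactly where the hypothesis $k\ge2$ (that is, $r\ge5$, with $r=3$ handled separately) is used.
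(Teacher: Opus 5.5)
Your argument is correct and is essentially the paper's. In both, the $S_i\cap S_j$-coloured complete bipartite graphs between diagonal cells force $|S_i\cap S_j|=k$, and then force the edges between the blocks $P\cup U$ and $Q\cup R$ to avoid $S_i\cap S_j$. The paper finishes by noting that all those cross edges use only $\{x,y\}$ (its $\{c_1,c_2\}$), so two colours connect $K_n$. You instead use $y$ plus the colour $z'$ of one $P$--$U$ edge; the set $\{y,z'\}$ already suffices, so removing $z$ is just padding. Both endings need $k\ge2$.

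One small slip: the negation of your first bullet only makes the $P$--$Q$ edges colour $y$. That bullet should read ``some $P$--$Q$ or $R$--$U$ edge has colour in $I$'', since an $R$--$U$ edge also joins $Q\cup R$ to $P\cup U$. This is what justifies your claim that every $R$--$U$ edge has colour $y$.
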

\begin{proof}
Suppose, for the sake of contradiction, that all four sets 
\begin{align*}
    & X_{00} = A_i \cap A_j \\ & X_{01} = A_i \cap B_j \\ & X_{10} = B_i \cap A_j \\ &X_{11} = B_i \cap B_j
\end{align*}
are nonempty. Note that all edges in $E(X_{00}, X_{11})$ receive colours from $S_i \cap S_j$, and the same is true for $E(X_{01}, X_{10})$. Thus $G_{\chi}[S_i \cap S_j]$ is a graph with at most two connected components, and so $A(n,r) \leq |S_i \cap S_j| + 1$, which implies $|S_i \cap S_j| \geq k$. \\

\noindent
Since $S_i \ne S_j$ and $|S_i| = |S_j| = k+1$, it follows that $S_i = S' \cup \{c_1\}$ and $S_j = S' \cup \{c_2\}$ for some set $S'$ of size $k$ and colours $c_1 \ne c_2$. Now, since $G_{\chi}[S']$ has exactly two connected components, no edge from $E(X_{00} \cup X_{11}, X_{01} \cup X_{10})$ has colour in $S'$. But any edge in this set certainly has colour in $S_i \cup S_j$, and thus the set of edges across this cut draws its colours from $\{c_1, c_2\}$. But now $G_{\chi}[\{c_1,c_2\}]$ is connected, a contradiction to $A(n,2k+1) = k+1 > 2$.
\end{proof}
\noindent
Call a set $A_i$ (or $B_i$) \emph{good} if there exists $j \ne i$ such that either $A_i \cap A_j$ or $A_i \cap B_j$ is empty. \hyperref[claim:4:2]{Claim 4.2} tells us that for all $i$, one of $A_i$ or $B_i$ is good.
\begin{claim}
\label{claim:4:3}
Fix $i \in [M]$. If $A_i$ (or $B_i$) is good, then there exists $v \in B_i$ (or $v \in A_i$) such that all edges $(u, v)$ for $u \in A_i$ are the same colour.
\end{claim}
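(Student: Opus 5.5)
The plan is to turn goodness into a nested pair of cuts, find a tight vertex inside the nested side, and rule out two colours on its edges to $A_i$ using the hypothesis $A(n,2k+1)=k+1$. I treat the case that $A_i$ is good; the case of $B_i$ is symmetric. Since $A_i$ is good, there is $j\neq i$ and a side $C_j\in\{A_j,B_j\}$ with $A_i\cap C_j=\varnothing$. Relabelling, assume $C_j=A_j$. Then $A_i\subseteq B_j$ and $A_j\subseteq B_i$, so the vertex set splits into three pieces: $A_i$, $A_j$ and $R=B_i\cap B_j$.

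The colours between the pieces are controlled as follows. Edges between $A_i$ and $A_j$ cross both cuts, so their colours lie in $I:=S_i\cap S_j$, and $|I|\le k$ since $S_i\neq S_j$. Edges between $A_i$ and $R$ have colours in $S_i$, and edges between $A_j$ and $R$ have colours in $S_j$. The candidate vertex $v$ will be taken in $A_j$. To make $A_j$ as small and tight as possible, I would first choose $j$ so that $A_j$ is inclusion-minimal among all sides of the partitions $A_l\sqcup B_l$ ($l\neq i$) that are disjoint from $A_i$. By Claim 4.2 all these partitions are pairwise parallel, so the sides contained in $B_i$ form a laminar family. Minimality then says that no other cut of the family separates points of $A_j$ from one another except cuts that leave $A_j$ on a single side.

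Now suppose, for contradiction, that every $v\in A_j$ sees at least two colours on its edges to $A_i$. Fix such a $v$ and two of its colours $c_1\neq c_2$, both in $I$. I would exploit the other partitions by swapping a colour. The $k$-set $T=(T_i\setminus\{d\})\cup\{c_1\}$, for a suitable $d\in T_i$, is some $T_l$ with $S_l=(S_i\setminus\{c_1\})\cup\{d\}$. Its partition $A_l\sqcup B_l$ is parallel to both $(A_i,B_i)$ and $(A_j,B_j)$, and its cut avoids colour $c_1$. Since $v$ is joined to $A_i$ by an edge of colour $c_1$, that edge cannot cross the cut $l$, so $v$ lies on the same side as that neighbour. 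Similarly, choosing $S_{l'}=(S_i\setminus\{c_2\})\cup\{d'\}$ forces $v$ onto the side of a $c_2$-neighbour. Playing these constraints against laminarity and against the minimality of $A_j$, I aim to produce either a side strictly inside $A_j$ that is disjoint from $A_i$, contradicting minimality, or a $k$-set of colours whose graph is connected, contradicting $A(n,2k+1)=k+1$. This is the same mechanism as in the last step of Claim 4.2, where $G_\chi[\{c_1,c_2\}]$ became connected.

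The main obstacle is this last step: choosing $d,d'$ so that the auxiliary partitions actually separate something inside $A_j$ rather than coinciding with $(A_i,B_i)$. My first attempt would take $d,d'\in T_i\cap S_j$, so that $S_l$ and $S_i$ differ only in a colour lying in $I$. I would then check directly that $(A_l,B_l)$ must cut $A_j\cup A_i$ nontrivially. If the minimality argument does not close, the fallback is to count. There are $M=\binom{2k+1}{k}$ pairwise parallel cuts and $A(n,2k+1)=k+1$, which I expect forces the laminar family to behave like leaf cuts of a tree. A leaf side is then a single vertex $v$, and the claim follows once we show that a leaf cut carries only one colour to $A_i$.
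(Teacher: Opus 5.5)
\textbf{There is a genuine gap.} You look for $v$ inside $A_j$, but the contradiction you are aiming for there does not exist. Take $r=5$ ($k=2$) and the standard construction on $\binom{[5]}{3}$, with two changes:
\begin{itemize}
\item the vertex $\{1,2,3\}$ is blown up into two vertices $u_1,u_2$, and the other nine $3$-sets are kept as single vertices;
\item every edge between the blobs of $S\neq S'$ gets some colour from $S\cap S'$, and $u_1u_2$ gets colour $1$.
\end{itemize}
Every edge at a vertex of the blob of $S$ then has its colour in $S$. So for each $2$-set $T$ the blob of $T^{c}$ is isolated in $G_\chi[T]$, and $\chi$ genuinely needs $k+1=3$ colours. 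We may therefore take $A_l$ to be the blob of $S_l$.

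Now $A_i=\{u_1,u_2\}$ is good. The sides disjoint from it are exactly the other blobs, and all of them are inclusion-minimal, so your rule permits $S_j=\{1,2,4\}$. Give $u_1v$ colour $1$ and $u_2v$ colour $2$, where $v$ is the vertex of $A_j$. Then your hypothesis ``every vertex of $A_j$ sees two colours into $A_i$'' holds in a legitimate configuration. Hence no combination of colour swaps, laminarity and minimality can close the last step, which you yourself leave open.

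The fallback does not help either. The leaf-cut structure is essentially what Theorem 4.1 extracts \emph{from} this claim, so assuming it is circular. Its assertion that a leaf side is a single vertex also fails here, since $\{u_1,u_2\}$ is a minimal side.

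\textbf{The missing idea:} $v$ should be found in $R=B_i\cap B_j$, and your setup already contains every ingredient.
\begin{itemize}
\item Since $|I|\le k<|S_i|$, pick $c\in S_i\setminus S_j$ and set $K=S_i\setminus\{c\}$. Then $K\supseteq I$ and $|K|=k$.
\item $G_\chi[K]$ contains every edge between the nonempty sets $A_i$ and $A_j$, so $A_i\cup A_j$ lies in a single component.
\item Because $\chi$ needs $k+1$ colours, $G_\chi[K]$ is disconnected, so some $x\in R$ lies outside that component.
\item Every edge $ux$ with $u\in A_i$ crosses the cut $(A_i,B_i)$, so its colour lies in $S_i$ but not in $K$. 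Hence it equals $c$.
\end{itemize}
This is the paper's proof. It needs no minimality of $A_j$, no laminarity, and no auxiliary partitions. In the example above it returns, for instance, the vertex $\{3,4,5\}$, whose edges to $u_1,u_2$ both have colour $3$.
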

\begin{proof}
Suppose that $A_i$ is good, and without losing generality suppose that $A_i \cap A_j = \varnothing$ for $j \ne i$. It follows that there is a partition $[n]$ as $A_i \sqcup X \sqcup A_j$ so that $B_i = X \cup A_j$ and $B_j = X \cup A_i$. Pick $c$ to be an arbitrary colour from $S_i \setminus S_j$. \\

\noindent
Note that $G_{\chi}[S_i \setminus \{c\}]$ contains all edges across $A_i$ and $A_j$ since all these edges have colours in $S_i \cap S_j$ and $c \not \in S_j$. In particular all vertices in $A_i \cup A_j$ are in the same connected component. Since $S_i \setminus \{c\}$ has size $k$ and $A(n,2k+1) = k+1$, we can find some $x \in X \subseteq B_i$ not in this connected component. It follows that no edge $(u, x)$ for $u \in A_i$ is present in $G_{\chi}[S_i \setminus \{c\}]$. But since $\chi(u,x) \in S_i$ for all such $u$, it follows that $\chi(u,x) = c$ for all $u \in A_i$, and so $v := x$ satisfies the required conditions.
\end{proof}
\begin{claim}
For all $i \in [M]$ \emph{exactly} one of $A_i$ and $B_i$ is good.
\end{claim}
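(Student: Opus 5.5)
The plan is a short proof by contradiction, resting on Claim 4.3. By Claim 4.2 we already know that \emph{at least} one of $A_i$ and $B_i$ is good. So it suffices to rule out that both are good. Suppose, for contradiction, that both $A_i$ and $B_i$ are good for some fixed $i\in[M]$. Recall that $A_i$ and $B_i$ are both nonempty, since they come from a disconnection of $G_\chi[T_i]$.

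First apply Claim 4.3 to the good set $A_i$. This gives a vertex $v\in B_i$ and a colour $c$ such that $\chi(u,v)=c$ for all $u\in A_i$. Next apply Claim 4.3 to the good set $B_i$. This gives a vertex $w\in A_i$ and a colour $c'$ such that $\chi(u,w)=c'$ for all $u\in B_i$.

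The key observation concerns the edge $vw$, which lies in $E(A_i,B_i)$. Since $w\in A_i$, the first property gives $\chi(v,w)=c$. Since $v\in B_i$, the second property gives $\chi(v,w)=c'$. Hence $c=c'$.

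Now consider the single-colour graph $G_\chi[\{c\}]$. Every vertex of $A_i$ is joined to $v$ in colour $c$, and every vertex of $B_i$ is joined to $w$ in colour $c$. Moreover $v$ and $w$ are themselves joined by an edge of colour $c$. Therefore $G_\chi[\{c\}]$ is connected and spans all of $[n]=A_i\sqcup B_i$. This gives $A(n,2k+1)\le 1$, contradicting $A(n,2k+1)=k+1\ge 3$. Hence exactly one of $A_i$ and $B_i$ is good.

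I do not expect a genuine obstacle. The only points needing care are the following. Claim 4.3 must be applied symmetrically: its statement covers $B_i$ good with the roles of $A_i$ and $B_i$ exchanged. One must also check that the special vertices lie on the correct sides ($v\in B_i$ and $w\in A_i$), so that the edge $vw$ exists and satisfies both monochromatic conditions.
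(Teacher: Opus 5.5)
Your proposal is correct and matches the paper's proof: you apply Claim 4.3 to both $A_i$ and $B_i$, note that the two monochromatic stars must share the colour of the edge joining their centres, and conclude that this single colour connects $K_n$, contradicting $A(n,2k+1)=k+1>1$.
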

\begin{proof}
We know already that at least one of the two is good. If both are good, then from \hyperref[claim:4:3]{Claim 4.3} we can find $u \in A_i$ and $v \in B_i$ such that $\chi$ is constant on $E(\{u\}, B_i)$ and $E(A_i, \{v\})$. But now these two colours are both equal to $\chi(u,v)$, and this single colour connects the graph, contradicting $A(n,2k+1) = k+1  > 1$.
\end{proof}
\noindent
Using the above, we can relabel $A_i, B_i$ so that all the $A_i$ are good. It follows that $A_i \cap A_j = \varnothing$ for all $i \ne j$. Since $A_i \subseteq V(G)$ for all $i$ and each $A_i$ is nonempty, we deduce $|V(G)| \geq M$, or that $n \geq \binom{2k+1}{k}$. We already argued that $A(M, 2k+1) = k+1$, so this finishes the proof.
\end{proof}
\noindent
\section{Acknowledgements and AI usage}
We are grateful to Noah Kravitz for pointing us to \cite{ABCK2024}, for his detailed reading of the paper, and for providing many helpful comments and suggestions that have improved this work. We also thank Abel George Mathew for providing a proof for \hyperref[thm:4:1]{Theorem 4.1}. We used ChatGPT as a sounding board for ideas, especially in the choices of explicit constants in the lemmas. It also provided proofreading assistance and help with the literature review. 
{\small
\bibliographystyle{alpha}
\bibliography{citations}
}
\begin{center}
{\scshape St John's College, University of Oxford; St Giles', Oxford OX1 3JP, UK\par}
\vspace{0.2em}
{\itshape Email address:} \texttt{archit.manas@sjc.ox.ac.uk}
\end{center}
\end{document}